\date{}
\newtheorem{theorem}{Theorem}[section]
\numberwithin{equation}{section}
\begin{document}
\setlength{\unitlength}{1cm}



\vskip1.5cm

 \centerline { \textbf{Boundary value problem with transmission conditions  }}
 \centerline { \textbf{}}

\vskip.2cm


\vskip.8cm \centerline {\textbf{K. Aydemir$^*$ and O. Sh.
Mukhtarov$^*$ }}

\vskip.5cm

\centerline {$^*$Department of Mathematics, Faculty of Science,}
\centerline {Gaziosmanpa\c{s}a University,
 60250 Tokat, Turkey}
\centerline {e-mail : {\tt omukhtarov@yahoo.com,
kadriye.aydemir@gop.edu.tr }}


\vskip.5cm \hskip-.5cm{\small{\bf Abstract :} One important
innovation here is that for the Sturm-Liouville considered equation
together with eigenparameter dependent boundary conditions and two
supplementary transmission conditions at one interior point. We
develop Green's function method for spectral analysis of the
considered problem in modified Hilbert space.

\vskip0.3cm\noindent {\bf Keywords :} \ Sturm-Liouville problems,
Green's function.

\section{\textbf{Introduction}}
In physics many problems arise in the form of boundary value
problems involving second order ordinary differential equations.
This derivation based on that in \cite{kr}, however, is more
thorough than that in most elementary physics texts; while most
parameters such as density and other thermal properties are treated
as constant in such treatments, the following allows fundamental
properties of the bar to vary as a function of the bar's length,
which will lead to a Sturm-Liouville problem of a more general
nature.  In this study we shall investigate one discontinuous
eigenvalue problem which consists of Sturm-Liouville equation,
\begin{equation}\label{1.1}
\Gamma (y):=-y^{\prime \prime }(x,\lambda)+ q(x)y(x,\lambda)=\lambda
y(x,\lambda)
\end{equation}
to hold in finite interval $(-\pi, \pi) $ except at one inner point
$0 \in (-\pi, \pi) $ , where discontinuity in $u  \ \textrm{and} \
u'$ are prescribed by transmission conditions
\begin{equation}\label{1.2}
\Gamma_{1}(y):=a_{1}y'(0-,\lambda)+a_{2}y(0-,\lambda)+a_{3}y'(0+,\lambda)+a_{4}y(0+,\lambda)=0,
\end{equation}
\begin{equation}\label{1.3}
\Gamma_{2}(y):=b_{1}y'(0-,\lambda)+b_{2}y(0-,\lambda)+b_{3}y'(0+,\lambda)+b_{4}y(0+,\lambda)=0,
\end{equation}
together with the  boundary conditions
\begin{equation}\label{1.4}
 \Gamma_{3}(y):=\cos \alpha y(-\pi,\lambda)+\sin\alpha y'(-\pi,\lambda)=0,
\end{equation}
\begin{equation}\label{1.5}
\Gamma_{4}(y):=\cos\beta y(\pi,\lambda)+\sin\beta y'(\pi,\lambda)=0,
\end{equation}
where the potential  $q(x)$ is real-valued, continuous in each
interval $[-1, 0) \\  \textrm{and} \ (0, 1]$ and has a finite limits
$q(\mp c)$ ; $\alpha_{0}, \beta_{0}, \beta_{1}$ are real numbers; \
$\lambda$ \ is a complex eigenparameter.  In this study by using an
own technique we introduce a new equivalent inner product in the
Hilbert space $L_{2}(-1,0)\oplus L_{2}(0,1)$ and a linear operator
in it such a way that the considered problem can be interpreted as
eigenvalue problem for this operator.

\section{ Preliminary Results
 }
Let T=$ \left[%
\begin{array}{cccc}
  a_{1} & a_{2} & a_{3} & a_{4} \\
  b_{1} & b_{2} & b_{3} & b_{4}
  \\
\end{array} %
 \right]. $
 Denote  the determinant of the i-th
 and
j-th columns of the matrix T  by $\rho_{ij}$. Note that throughout
this study we shall assume that $ \rho_{12}>0 \ \ \textrm{and} \
\rho_{34}>0.$

In this section we shall define two basic solutions $\phi(x,\lambda)
\ \textrm{and} \ \chi(x,\lambda)$ by own technique as follows.  At
first, let us consider solutions of the equation (\ref{1.1})  on the
left-hand $\left[ -\pi,0\right)$ of the considered interval
$\left[\pi,0\right)\cup(0,\pi]$ satisfying the initial conditions
\begin{eqnarray}\label{(2.10)}
\label{tam1}
 \ \ y(-\pi,\lambda)=\sin\alpha , \
\ \frac{\partial y(-\pi,\lambda)}{\partial x}=-\cos\alpha
\end{eqnarray}
By virtue of well-known existence and uniqueness theorem of ordinary
differential equation theory  this initial-value problem for each
$\lambda $ has a unique solution $\phi _{1}(x,\lambda )$. Moreover
[\cite{ti}, Teorem 7] this solution
is an entire function of $%
\lambda $ for each fixed $x\in \left[ -\pi,0\right).$ Using this
solutions we can prove that the equation (\ref{1.1}) on  the
right-hand interval $\in (0,\pi]$ of the considered interval
$\left[-\pi,0\right)\cup(0,\pi]$ has the solution
$u=\phi_2(x,\lambda)$ satisfying the initial conditions
\begin{eqnarray}\label{8}
&&y(0,\lambda) =\frac{1}{\rho_{12}}(\rho_{23}\phi_{1}(0,\lambda
)+\rho_{24}\frac{\partial\phi_{1}(0,\lambda )}{\partial x})\\
&& \label{9} y^{\prime }(0,\lambda)
=\frac{-1}{\rho_{12}}(\rho_{13}\phi_{1}(0,\lambda
)+\rho_{14}\frac{\partial\phi_{1}(0,\lambda )}{\partial x}).
\end{eqnarray}
By applying the method of \cite{os1}  we can prove that  the
equation (\ref{1.1}) on $ (0,\pi]$ has an unique solution
$\phi_{2}(x,\lambda)$ satisfying the conditions (\ref{8})-(\ref{9})
which also is an entire function of the parameter $\lambda $ for
each fixed $x \in (0,\pi]$. Consequently, the function  $\phi
(x,\lambda )$ defined by
\begin{equation}
\phi (x,\lambda )=\{
\begin{array}{c}
\phi _{1}(x,\lambda )\text{ \ for }x\in \lbrack -\pi,0) \\
\phi _{2}(x,\lambda )\text{ \ for }x\in (0,\pi].%
\end{array}
\label{(3.16)}
\end{equation}
 satisfies equation $(\ref{1.1})$,  the first boundary condition
$(\ref{1.4})$ and the both transmission conditions $(\ref{1.2})$ and $(%
\ref{1.3})$. Similarly, $\chi_{2}(x,\lambda)$   be solutions of
equation (\ref{1.1}) on the left-right interval $(0, \pi]$ subject
to initial conditions
\begin{equation}\label{4}
 \ \ y(\pi,\lambda)=-\sin\beta , \
\  \frac{\partial y(\pi,\lambda)}{\partial x}=\cos\beta.
\end{equation}
By virtue of [\cite{ti}, Teorem 7] each of these solutions are
entire functions of $\lambda$ for fixed x. By applying the same
technique we can prove  there is an unique solution
$\chi_{1}(x,\lambda)$ of equation (\ref{1.1}) the left-hand interval
$[-\pi, 0) \ $ satisfying the initial condition
\begin{eqnarray}\label{11}
&&y(0,\lambda) =\frac{-1}{\rho_{34}}(\rho_{14}\chi_{2}(0,\lambda
)+\rho_{24}\frac{\partial\chi_{2}(0,\lambda )}{\partial x}),\\
&& \label{12} y^{\prime }(0,\lambda)
=\frac{1}{\rho_{34}}(\rho_{13}\chi_{2}+\rho_{23}\frac{\partial\chi_{2}(0,\lambda
)}{\partial x}).
\end{eqnarray}
 By applying the similar technique as in \cite{os1} we
can prove that the solutions $\ \chi_{1}(x,\lambda)$ are also an
entire functions of parameter $\lambda$ for each fixed x.
Consequently, the function $ \chi(x,\lambda)$ defined  as
\begin{displaymath} \chi(x,\lambda)=\left
\{\begin{array}{ll}
\chi_{1}(x,\lambda), & x\in [-\pi, 0) \\
\chi_{2}(x,\lambda), & x\in (0, \pi] \\
\end{array}\right.
\end{displaymath}
satisfies the equation (\ref{1.1}) on whole $[-\pi, 0)\cup (0,
\pi]$, the other boundary condition (\ref{1.5}) and the both
transmission conditions (\ref{1.2}) and (\ref{1.3}).
 In the Hilbert Space
$\mathcal{H}=L_{2}[-1,0)  \oplus L_{2}(0,1] $ of two-component
vectors we define an inner product by
$$
<y,z>_{\mathcal{H}}:= \ \rho_{12} \int_{-\pi}^{0}
y(x)\overline{z(x)}dx + \rho_{34} \int_{0}^{\pi}
y(x)\overline{z(x)}dx
$$
for $y= y(x), \ z= z(x) \ \in \mathcal{H}$. We introduce the linear
operator $A:\mathcal{H}\rightarrow \mathcal{H}$  with domain of
definition satisfying the
following conditions\\
 i) $y \ \textrm{and} \ y'$ are absolutely
continuous in each of intervals $[-\pi,0)$ \ and $(0,\pi]$ and has a
finite limits $y(c\mp) \ \textrm{and} \ y'(c\mp)$\\
 ii) $\Gamma y(x) \in
\mathcal{H}, \ \Gamma_1 y(x)=\Gamma_2 y(x)=\Gamma_3 y(x)=\Gamma_4 y(x)=0,$\\
Obviously $D(A)$ is a linear subset dense in $\mathcal{H}$. We put
$$(Ay)(x)=\Gamma y(x), \ x\in \mathcal{H}
$$ for $y \in D(A)$. Then the problem $(\ref{1.1})-(\ref{1.5})$ is
equivalent to the equation
$$Ay=\lambda y $$ in the Hilbert space $\mathcal{H}$.
 Taking in view that the   Wronskians
  $ W(\phi_{i},\chi_{i};x) :=
\phi_{i}(x,\lambda)\chi'_{i}(x,\lambda)-\phi'_{i}(x,\lambda)\chi_{i}(x,\lambda)$
\ are independent of variable x  we shall denote  $
w_{i}(\lambda)=W(\phi_{i}, \chi_{i};x) \ (i=1,2)$. By using
(\ref{1.2}) and (\ref{1.3}) we have $ \label{7}
\rho_{12}w_{1}(\lambda)= \rho_{34}w_{2}(\lambda) $ for each $\lambda
\in \mathbb{C}$. It is convenient to introduce the notation
 \begin{equation}\label{kn}w(\lambda):=\rho_{34} w_{1}(\lambda) =  \rho_{12}
w_{2}(\lambda).\end{equation}
\begin{theorem}
For all $y, z \in D(A),$  the equality
\begin{eqnarray}\label{sim}<Ay,z>=<y,Az>
\end{eqnarray}
holds.
\end{theorem}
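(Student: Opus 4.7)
The goal is to verify symmetry of $A$ by the usual two-fold integration by parts on each subinterval, absorbing the residual boundary and transmission terms via the conditions defining $D(A)$. The only novelty is the weighted inner product and the interior jump at $x=0$; these two features are exactly matched by the factors $\rho_{12}$, $\rho_{34}$.

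First I would write
\begin{equation*}
\langle Ay,z\rangle_{\mathcal H}-\langle y,Az\rangle_{\mathcal H}
=\rho_{12}\!\int_{-\pi}^{0}\!\bigl[(\Gamma y)\bar z-y\,\overline{\Gamma z}\bigr]dx
+\rho_{34}\!\int_{0}^{\pi}\!\bigl[(\Gamma y)\bar z-y\,\overline{\Gamma z}\bigr]dx .
\end{equation*}
Since $\Gamma y=-y''+qy$ with real $q$, the bracket reduces to $y\bar z''-y''\bar z$, and two successive integrations by parts give, on each subinterval,
\begin{equation*}
\int_a^{b}\bigl[y\bar z''-y''\bar z\bigr]dx=\bigl[y,\bar z\bigr]\!\bigm|_a^b,\qquad
[y,\bar z](x):=y(x)\bar z{\,'}(x)-y'(x)\bar z(x).
\end{equation*}
Thus the difference $\langle Ay,z\rangle-\langle y,Az\rangle$ collapses to
\begin{equation*}
\rho_{12}\bigl([y,\bar z](0{-})-[y,\bar z](-\pi)\bigr)
+\rho_{34}\bigl([y,\bar z](\pi)-[y,\bar z](0{+})\bigr).
\end{equation*}

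Next I would kill the boundary terms at $\pm\pi$. Because $\Gamma_3(y)=\Gamma_3(z)=0$, the vectors $(y(-\pi),y'(-\pi))$ and $(\bar z(-\pi),\bar z{\,'}(-\pi))$ are both annihilated by $(\cos\alpha,\sin\alpha)$, hence are proportional; their skew product $[y,\bar z](-\pi)$ therefore vanishes. The same argument using $\Gamma_4$ yields $[y,\bar z](\pi)=0$.

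The main step, and the one whose bookkeeping requires care, is
\begin{equation*}
\rho_{12}[y,\bar z](0{-})=\rho_{34}[y,\bar z](0{+}),
\end{equation*}
which is precisely the identity responsible for the choice of weights $\rho_{12},\rho_{34}$ in $\langle\cdot,\cdot\rangle_{\mathcal H}$ and is a bilinear version of the Wronskian relation $\rho_{12}w_1(\lambda)=\rho_{34}w_2(\lambda)$ already derived in the excerpt. To establish it I would set
\begin{equation*}
M=\begin{pmatrix}a_1&a_2\\ b_1&b_2\end{pmatrix},\qquad
N=\begin{pmatrix}a_3&a_4\\ b_3&b_4\end{pmatrix},
\end{equation*}
so that $\det M=\rho_{12}$, $\det N=\rho_{34}$, and rewrite the transmission conditions $\Gamma_1(y)=\Gamma_2(y)=0$ as the matrix equation $M\bigl(\begin{smallmatrix}y'(0{-})\\ y(0{-})\end{smallmatrix}\bigr)=-N\bigl(\begin{smallmatrix}y'(0{+})\\ y(0{+})\end{smallmatrix}\bigr)$, with the analogous relation for $\bar z$ (the $a_i,b_i$ being real). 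Forming the $2\times2$ matrices whose columns are the boundary data of $y$ and $\bar z$ and taking determinants then gives
\begin{equation*}
\rho_{12}\bigl(y'(0{-})\bar z(0{-})-y(0{-})\bar z{\,'}(0{-})\bigr)
=\rho_{34}\bigl(y'(0{+})\bar z(0{+})-y(0{+})\bar z{\,'}(0{+})\bigr),
\end{equation*}
which is the required identity. Combining all three cancellations yields $\langle Ay,z\rangle=\langle y,Az\rangle$. The delicate point is keeping the column order, the sign of the skew product, and the factor $(-1)^2$ coming from $\det(-N^{-1}M)$ straight; once this is organized the rest is routine integration by parts.
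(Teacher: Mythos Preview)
Your argument is correct and follows exactly the same route as the paper: integration by parts on each subinterval reduces the difference $\langle Ay,z\rangle-\langle y,Az\rangle$ to weighted Wronskian boundary terms, the endpoint terms at $\pm\pi$ vanish via $\Gamma_3,\Gamma_4$, and the interior terms cancel by the identity $\rho_{12}[y,\bar z](0{-})=\rho_{34}[y,\bar z](0{+})$. The only difference is that the paper simply asserts this last identity as a ``direct calculation,'' whereas you supply the explicit determinant argument with the matrices $M,N$; otherwise the proofs coincide.
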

\begin{proof}
 Integrating by parts we have for all $y, z \in D(A),$
\begin{eqnarray}\label{2.z}
<Ay,z> &=& \rho_{12}\int_{-\pi}^{0} \Gamma y(x)\overline{z(x)}dx
+\rho_{34}\int_{0}^{\pi} \Gamma y(x)\overline{z(x)}dx
\nonumber\\
&=& \rho_{12}\int_{-\pi}^{0} y(x)\overline{\Gamma z(x)}dx
+\rho_{34}\int_{0}^{\pi} y(x)\overline{\Gamma z(x)}dx  +
\rho_{12}W[y, \overline{z};0-]\nonumber\\&-&\rho_{12}W[y,
\overline{z};-\pi] +\rho_{34} W[y,\overline{z};\pi] - \rho_{34} \
W[y,\overline{z};0]
\nonumber\\
&=&\ <y,Az>  + \rho_{12}W[y_{0}, \overline{z};0] - \rho_{12}
W[y_{0}, \overline{y_{0}};-\pi]\nonumber\\ &+&\rho_{34}
W[y_{0},\overline{y_{0}};\pi] - \rho_{34}\
W[y_{0},\overline{y_{0}};0]
\end{eqnarray}
From the boundary  conditions $(\ref{1.2})$-$(\ref{1.3})$ it is
follows obviously that
\begin{equation}\label{c1}W(y, \overline{z};-\pi)=0
 \ \textrm{and} \  W(y, \overline{z};\pi)=0
\end{equation}
The direct calculation gives
\begin{equation}\label{c3}\rho_{12}W(y,
\overline{z};0) = \rho_{34}W(y, \overline{z};0).
\end{equation}
Substituting (\ref{c1}) and (\ref{c3}) in (\ref{2.z}) we obtain the
equality (\ref{sim}).
\end{proof}
Relation (\ref{sim}) shows that the operator A is symmetric.
Therefore all eigenvalues of the operator A are real and two
eigenfunctions corresponding to the distinct eigenvalues are
orthogonal in the sense of the following equality
\begin{eqnarray}\label{2.3}\rho_{12}\int_{-\pi}^{0} y(x)z(x)dx + \rho_{34} \int_{0}^{\pi}
y(x)z(x)dx=0.
\end{eqnarray}
\begin{theorem}
 $D(A)$  is densely in the Hilbert space $\mathcal{H}$.
\end{theorem}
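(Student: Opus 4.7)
The plan is to exhibit a familiar dense subspace of $\mathcal{H}$ that sits inside $D(A)$; once this is done, density of $D(A)$ follows at once. Since $\rho_{12}>0$ and $\rho_{34}>0$, the weighted inner product $<\cdot,\cdot>_{\mathcal{H}}$ is equivalent to the standard inner product on $L_2(-\pi,0)\oplus L_2(0,\pi)$, so it suffices to establish density in the usual $L_2$ sense.

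Concretely, I would take the test space $\mathcal{D}_0 := C_0^\infty(-\pi,0)\oplus C_0^\infty(0,\pi)$, i.e.\ pairs $y=(y_1,y_2)$ where each component is smooth and compactly supported inside the corresponding open interval. The first step is to verify $\mathcal{D}_0\subset D(A)$. For such a $y$ the components are $C^\infty$ with compact support, so $y$ and $y'$ are absolutely continuous on $[-\pi,0)$ and $(0,\pi]$, and all one-sided limits $y(\mp\pi)$, $y'(\mp\pi)$, $y(0\mp)$, $y'(0\mp)$ vanish. Consequently $\Gamma_1 y=\Gamma_2 y=\Gamma_3 y=\Gamma_4 y=0$ regardless of the values of $a_i,b_i,\alpha,\beta$, and $\Gamma y=-y''+qy$ is a compactly supported continuous function on each subinterval (using that $q$ is continuous there), so $\Gamma y\in\mathcal{H}$.

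The second step is to invoke the classical fact that $C_0^\infty(-\pi,0)$ is dense in $L_2(-\pi,0)$ and $C_0^\infty(0,\pi)$ is dense in $L_2(0,\pi)$. Therefore $\mathcal{D}_0$ is dense in $L_2(-\pi,0)\oplus L_2(0,\pi)$, hence dense in $\mathcal{H}$. Since $\mathcal{D}_0\subset D(A)$, this forces $\overline{D(A)}=\mathcal{H}$, which is the desired conclusion.

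There is no real obstacle here; the only point requiring a moment of care is ensuring that \emph{every} boundary and transmission condition really is satisfied by elements of $\mathcal{D}_0$. This is why insisting on compact support strictly inside each subinterval (so that $y$ vanishes in a neighborhood of $-\pi$, $\pi$ and of $0$ from both sides) is essential: it annihilates $\Gamma_3,\Gamma_4$ through the end-point values and annihilates $\Gamma_1,\Gamma_2$ through the one-sided limits at $0$, without imposing any relations among the coefficients $a_i,b_i,\alpha,\beta$. Once this is observed, the proof is a one-line consequence of the standard $C_0^\infty$ density theorem.
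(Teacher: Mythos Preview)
Your argument is correct: the test space $C_0^\infty(-\pi,0)\oplus C_0^\infty(0,\pi)$ sits inside $D(A)$ because compact support away from $\{-\pi,0,\pi\}$ kills all four conditions $\Gamma_1,\Gamma_2,\Gamma_3,\Gamma_4$ and makes $\Gamma y$ square-integrable, and this test space is dense in $L_2(-\pi,0)\oplus L_2(0,\pi)$ (hence in $\mathcal{H}$, since the positive weights $\rho_{12},\rho_{34}$ give an equivalent norm).

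As for comparison with the paper: the paper's proof environment for this theorem is empty --- no argument is supplied. Your proof therefore does not merely parallel the paper's approach, it \emph{fills a gap} that the authors left open. The route you chose is the standard and most economical one for this kind of transmission problem, and nothing in the paper's setup (the specific form of $\Gamma_1,\Gamma_2$ or the weighted inner product) obstructs it.
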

\begin{proof}
\end{proof}
\begin{theorem}
The operator A is self-adjoint.
\end{theorem}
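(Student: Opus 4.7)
The plan is to show that $D(A^*) \subseteq D(A)$; combined with symmetry (already established in Theorem~\ref{sim} above), this yields self-adjointness. So fix $u \in D(A^*)$ with $A^* u = f$, meaning $\langle Ay, u\rangle_{\mathcal{H}} = \langle y, f\rangle_{\mathcal{H}}$ for every $y \in D(A)$. The goal is to verify that $u$ satisfies the regularity conditions (i), the equation $\Gamma u = f$, and each of the four functional conditions $\Gamma_k u = 0$, $k=1,2,3,4$.

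First I would establish the interior regularity and the ODE. Let $v_1$ be the solution of $\Gamma v = f$ on $[-\pi,0)$ and $v_2$ the solution on $(0,\pi]$, chosen to be twice continuously differentiable on the closures. Using test functions $y\in D(A)$ that are compactly supported in $(-\pi,0)$ (and zero on the right piece), two integrations by parts on the left half show
\[
\rho_{12}\int_{-\pi}^{0}(-y''+qy)\overline{u}\,dx = \rho_{12}\int_{-\pi}^{0} y\overline{f}\,dx,
\]
hence $\rho_{12}\int_{-\pi}^{0}y(\overline{u-v_1})''\,dx = 0$ in the distributional sense, so $u$ agrees with $v_1$ up to a solution of the homogeneous equation on $[-\pi,0)$. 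In particular $u$ and $u'$ are absolutely continuous on $[-\pi,0)$ with finite one-sided limits at $0-$ and $-\pi$, and $\Gamma u = f$ a.e.\ there. The same argument on $(0,\pi]$ yields condition (i) on the right piece and $\Gamma u = f$ on $(0,\pi]$, so $\Gamma u \in \mathcal{H}$.

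Next I would use this regularity to integrate by parts globally. For arbitrary $y \in D(A)$ one gets, by the computation already carried out in the proof of Theorem~\ref{sim},
\[
\langle Ay,u\rangle_{\mathcal{H}} - \langle y,A^*u\rangle_{\mathcal{H}}
 = \rho_{12}W[y,\overline{u};0-] - \rho_{12}W[y,\overline{u};-\pi]
   + \rho_{34}W[y,\overline{u};\pi] - \rho_{34}W[y,\overline{u};0+].
\]
This expression must vanish for every $y \in D(A)$. The strategy now is to feed into this identity enough test functions $y$ to separately force the four Wronskian-type terms to vanish. Choosing $y$ supported near $-\pi$ and prescribing $y(-\pi)$, $y'(-\pi)$ so that $\Gamma_3 y = 0$ but $(y(-\pi),y'(-\pi))$ is otherwise arbitrary in the kernel of $\Gamma_3$, one deduces $\Gamma_3 u = 0$. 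The analogous choice near $\pi$ yields $\Gamma_4 u = 0$.

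The main obstacle, and the heart of the proof, is extracting the two transmission conditions at $0$. After using $\Gamma_3 y = \Gamma_4 y = 0$ to kill the endpoint Wronskians, the remaining identity
\[
\rho_{12}W[y,\overline{u};0-] = \rho_{34}W[y,\overline{u};0+]
\]
must hold for every admissible $y$. By prescribing the four boundary values $y(0\mp)$, $y'(0\mp)$ of $y$ subject only to $\Gamma_1 y = \Gamma_2 y = 0$ (a two-dimensional family inside a four-dimensional space; such $y$ exist by solving the initial value problem for $\Gamma y = 0$ with freely chosen data at $0\pm$ and then cutting off before the endpoints, modified to absorb the resulting endpoint mismatch using the two already-established free parameters there), one obtains a linear system on the vector $(\overline{u}(0-),\overline{u}'(0-),\overline{u}(0+),\overline{u}'(0+))$ that, by virtue of the assumptions $\rho_{12}>0$ and $\rho_{34}>0$ and elementary linear algebra on the $2\times 4$ matrix $T$, is equivalent to the two equations $\Gamma_1 u = 0$ and $\Gamma_2 u = 0$. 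This is the delicate bookkeeping step: one must verify that the orthogonal complement (with respect to the weighted symplectic form on $\mathbb{C}^4$ induced by $\rho_{12}W[\cdot,\cdot;0-]-\rho_{34}W[\cdot,\cdot;0+]$) of the space cut out by $\Gamma_1 = \Gamma_2 = 0$ is exactly that space itself. Once this is checked, $u$ meets every requirement of $D(A)$, so $D(A^*)\subseteq D(A)$ and $A = A^*$.
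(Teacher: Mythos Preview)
The paper gives no proof of this theorem --- the proof environment is left empty --- so there is nothing to compare your argument against directly. Your approach, showing $D(A^*)\subseteq D(A)$ by interior regularity plus boundary-form analysis, is the standard one and is correct in outline. The step you single out as delicate is indeed the heart of the matter: the transmission subspace $\{\Gamma_1=\Gamma_2=0\}\subset\mathbb{C}^4$ is two-dimensional (since $\rho_{12}\neq 0$ forces $T$ to have rank $2$) and isotropic for the form $\rho_{12}W[\cdot,\cdot;0-]-\rho_{34}W[\cdot,\cdot;0+]$ by precisely the computation (\ref{c3}) already used in the symmetry proof; the hypotheses $\rho_{12}>0$, $\rho_{34}>0$ make this form nondegenerate on $\mathbb{C}^4$, so a two-dimensional isotropic subspace is Lagrangian and hence equals its own symplectic orthogonal, which is exactly what you need. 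One small simplification: you need not solve $\Gamma y=0$ to manufacture the test functions with prescribed data at $0\pm$; any smooth $y$ on each subinterval with those values at $0\pm$ and satisfying $\Gamma_3 y=\Gamma_4 y=0$ at the outer endpoints already lies in $D(A)$, and such functions exist by elementary interpolation. As an aside, the paper's subsequent explicit construction of the resolvent via the Green's function (\ref{2.15})--(\ref{2.16}) would furnish an alternative route: a densely defined symmetric operator with a real point in its resolvent set is automatically self-adjoint.
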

\begin{proof}
\end{proof}

\section{The Green' s function}
Let us consider the inhomogeneous differential equation
\begin{eqnarray}\label{2.5}
y''+(\lambda-q(x)y=f(x), \ \ x \in[-\pi,0)\cup(0,\pi]
\end{eqnarray}
together with the  boundary conditions (\ref{1.4})-(\ref{1.5})
 and the transmission conditions (\ref{1.2})-(\ref{1.3}).
Making use of the definitions of the functions $ \phi_{i}, \chi_{i}$
(i = 1, 2) we see that the general solution of the differential
equation (\ref{2.5}) can be represented in the form
\begin{eqnarray}\label{2.10}
y(x,\lambda)=\left\{\begin{array}{c}
               \frac{\chi_{1}(x,\lambda)}{\omega_{1}(\lambda)}\int_{-\pi}^{x}\phi_{1}(\xi,\lambda)f(\xi)d\xi +
\frac{\phi_{1}(x,\lambda)}{\omega_{1}(\lambda)}\int_{x}^{0}\chi_{1}(\xi,\lambda)f(\xi)d\xi \\ +c_{1}\phi_{1}(x,\lambda)+d_{1}\chi_{1}(x,\lambda) \ , \ \ \ \ \ \ \ \ for \ x \in [-\pi,0)  \\
                \\
               \frac{\chi_{2}(x,\lambda)}{\omega_{2}(\lambda)}\int_{0}^{x}\phi_{2}(\xi,\lambda)f(\xi)d\xi +
\frac{\phi_{2}(x,\lambda)}{\omega_{2}(\lambda)}\int_{x}^{\pi}\chi_{2}(\xi,\lambda)f(\xi)d\xi\\ +c_{2}\phi_{2}(x,\lambda)+d_{2}\chi_{2}(x,\lambda) \ , \ \ \ \ \ \ \ \ for \ x \in (0,\pi] \\
             \end{array}\right.
\end{eqnarray}
where $c_{i}, d_{i} \ (i=1,2)$  are arbitrary constants. By
substitution into the boundary conditions (\ref{1.4})-(\ref{1.5}) we
have at once that
\begin{eqnarray}\label{2.11}d_{1}=0, \
c_{2}=0.
\end{eqnarray}
Further, substitution (\ref{2.10}) into transmission conditions
(\ref{1.2})-(\ref{1.3}) we have the inhomogeneous linear system of
equations for $c_{1}$ \textrm{and} $d_{1}$  , the determinant of
which is equal to $- \omega(\lambda)$ and  therefore is not vanish
by assumption. Solving that system we find
\begin{eqnarray}\label{2.12}
c_{1}=\frac{1}{\omega_{2}(\lambda)}\int_{0}^{\pi}\chi_{2}(\xi,\lambda)f(\xi)d\xi,
\ \ \ \
d_{2}=\frac{1}{\omega_{1}(\lambda)}\int_{-\pi}^{0}\phi_{1}(\xi
z,\lambda)f(\xi)d\xi
\end{eqnarray}
Putting this equations in (\ref{2.10}) we deduce that the problem
(\ref{2.5}), (\ref{1.2})-(\ref{1.4}) has an unique solution
$y:=y_{f}(x,\lambda)$ in the form
\begin{eqnarray}\label{2.14}
y_{f}(x,\lambda)=\left\{\begin{array}{c}
               \frac{\rho_{34}\chi_{1}(x,\lambda)}{\omega(\lambda)}\int_{-1}^{x}\phi_{1}(\xi,\lambda)f(\xi)d\xi +
\frac{\rho_{34}\phi_{1}(x,\lambda)}{\omega(\lambda)}\int_{x}^{0}\chi_{1}(\xi,\lambda)f(\xi)d\xi  \\
 +\frac{\rho_{12}\phi_{1}(x,\lambda)}{\omega(\lambda)}\int_{0}^{\pi}\chi_{2}(\xi,\lambda)f(\xi)d\xi \
  \ \ \ \ \ \ \ \ \textrm{for} \  x \in [-\pi,0) \\
                \\
\frac{\rho_{12}\chi_{2}(x,\lambda)}{\omega(\lambda)}\int_{0}^{x}\phi_{2}(\xi,\lambda)f(\xi)d\xi
+
\frac{\rho_{12}\phi_{2}(x,\lambda)}{\omega(\lambda)}\int_{x}^{1}\chi_{2}(\xi,\lambda)f(\xi)d\xi\\
 +\frac{\rho_{34}\chi_{2}(x,\lambda)}{\omega(\lambda)}\int_{-\pi}^{0}\phi_{1}(\xi,\lambda)f(\xi)d\xi\
  \ \ \ \ \ \ \ \ \textrm{for} \  x \in (0,\pi] \\
             \end{array}\right.
\end{eqnarray}
From this formula we derive that the Green' s function  of the
problem (\ref{2.5}), (\ref{1.2})-(\ref{1.4}) can be represented as
\begin{eqnarray}\label{2.15}
G(x,\xi;\lambda)=\left\{\begin{array}{c}
 \frac{\phi(\xi,\lambda)\chi(x,\lambda)}{\omega(\lambda)} \ \ \ \ \textrm{for} \  -\pi\leq \xi\leq x\leq \pi, \ \  x, \xi \neq 0\\
  \frac{\phi(x,\lambda)\chi(\xi,\lambda)}{\omega(\lambda)} \ \ \ \ \textrm{for} \  -\pi\leq x\leq \xi\leq \pi, \ \  x, \xi\neq 0 \\
             \end{array}\right.
\end{eqnarray}
and the formula (\ref{2.14}) can be rewritten in terms of this
Green' s function as
\begin{eqnarray}\label{2.16}
y_{f}(x,\lambda)&=& \rho_{12}\int_{-\pi}^{0}
G(x,\xi;\lambda)f(\xi)d\xi+ \rho_{34} \int_{0}^{\pi}
G(x,\xi;\lambda)f(\xi)d\xi
\end{eqnarray}
\begin{theorem}
 The resolvent operator $R(\lambda,A)$  is compact.
\end{theorem}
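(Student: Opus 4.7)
The plan is to recognize $R(\lambda,A)$ as the integral operator on $\mathcal{H}$ whose kernel is the Green's function $G(x,\xi;\lambda)$ constructed above, and then to verify that this kernel is square-integrable on the product domain; this will make $R(\lambda,A)$ a Hilbert--Schmidt operator and hence compact. All of the analytic work has essentially been done already in constructing $G$.

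First, I would appeal to \eqref{2.16}: for $\lambda$ with $\omega(\lambda)\neq 0$ and $f\in\mathcal{H}$, the unique solution $y_{f}$ of the resolvent equation $(\lambda I-A)y_{f}=f$ is
$$
 y_{f}(x)=\rho_{12}\int_{-\pi}^{0}G(x,\xi;\lambda)f(\xi)\,d\xi+\rho_{34}\int_{0}^{\pi}G(x,\xi;\lambda)f(\xi)\,d\xi,
$$
so $R(\lambda,A)$ coincides with this integral operator on all of $\mathcal{H}$. Next, I would inspect \eqref{2.15}: on each of the four subrectangles $[-\pi,0)\times[-\pi,0)$, $[-\pi,0)\times(0,\pi]$, $(0,\pi]\times[-\pi,0)$, $(0,\pi]\times(0,\pi]$, the Green's function is of the form $\phi_{i}(\cdot,\lambda)\chi_{j}(\cdot,\lambda)/\omega(\lambda)$ for suitable $i,j\in\{1,2\}$. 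Since each $\phi_{i},\chi_{i}$ is continuous in $x$ on the corresponding closed subinterval (including the one-sided limits at $0$), it is bounded there, and therefore
$$
 \iint\bigl|G(x,\xi;\lambda)\bigr|^{2}\,dx\,d\xi\le 4\pi^{2}M(\lambda)^{2}<\infty
$$
for some constant $M(\lambda)$ depending on $\lambda$ but not on $(x,\xi)$.

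This is precisely the defining condition for $R(\lambda,A)$ to be a Hilbert--Schmidt operator on $\mathcal{H}$; the weights $\rho_{12},\rho_{34}$ appearing in the inner product simply rescale the kernel and do not affect the finiteness of the Hilbert--Schmidt norm. Invoking the standard fact that every Hilbert--Schmidt operator on a Hilbert space is compact completes the argument. I do not expect a substantial obstacle: the hard analytic content, namely the existence of the Green's function and the nonvanishing of $\omega(\lambda)$ on the resolvent set, is already in hand from Section 3, so what remains is only careful bookkeeping with the weights $\rho_{12},\rho_{34}$ and the observation that the four subrectangles exhaust the product domain up to a set of measure zero.
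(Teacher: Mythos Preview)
The paper's own proof of this theorem is left empty (the \texttt{proof} environment contains nothing), so there is nothing to compare your argument against directly. Your proposal is correct and is exactly the standard route one would expect here: identify $R(\lambda,A)$ with the integral operator \eqref{2.16}, observe from \eqref{2.15} that $G(x,\xi;\lambda)$ is bounded on the closed square $[-\pi,\pi]\times[-\pi,\pi]$ (because each $\phi_i,\chi_i$ is continuous on its closed subinterval), hence square-integrable with respect to the weighted product measure, and conclude that $R(\lambda,A)$ is Hilbert--Schmidt and therefore compact. The only cosmetic point worth tightening is the sign/convention in the resolvent equation: from \eqref{2.5} one has $(\lambda I-A)y_f=f$, so $y_f=(\lambda I-A)^{-1}f$, which differs from $R(\lambda,A)=(A-\lambda I)^{-1}$ only by a sign and does not affect compactness.
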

\begin{proof}
\end{proof}

\vskip 1truecm
\end{document}